\newtheorem{theorem}{Theorem}
\newtheorem{lemma}{Lemma}
\newtheorem{corollary}{Corollary}
\newtheorem{definition}{Definition}
\begin{document}

\title{A Note on an Analytic Approach to the Problem of Matroid Representability, The Cardinality of Sets of k-Independent
Vectors over Finite Fields and the Maximum Distance Separable Conjecture }
\author{Jeffrey Sun and Steven B. Damelin \footnote{Department of Mathematics, University of Michigan, Ann Arbor, MI 48109, USA; jeffjeff@umich.edu, damelin@umich.edu.}}
\maketitle

\begin{abstract}
We introduce various quantities that can be defined for an arbitrary matroid, and show that certain conditions on these quantities imply that a matroid is not representable over $\mathbb{F}_q$ where $q$ is a prime power.  Mostly, for a matroid of rank $r$, we examine the proportion of size-$(r-k)$ subsets that are dependent, and give bounds, in terms of the cardinality of the matroid and $q$, for this proportion, below which the matroid is not representable over $\mathbb{F}_q$. We also explore connections between the defined quantities and demonstrate that they can be used to prove that random matrices have high proportions of subsets of columns independent. Our study relates  to the results of our papers \cite{dmms, dmic, sun} dealing with the cardinality of sets of $k$-independent
vectors over $\mathbb{F}_q$ and the Maximal Distance Separation Conjecture over $\mathbb{F}_q$. 
\bigskip

MSC Classification: 05B35. Keywords: Matroid, Linear independence, Finite field, Extension, Maximal distance separation conjecture.
\end{abstract}

\section{Introduction}

A finite matroid $M$ is a pair ($\Lambda$, $\tau$) where $\Lambda$ is a finite set and $\tau$ is a family of subsets of $\Lambda$ (called independent sets) with the folowing properties. 
(a). The empty set is independent and (b) Every subset of an independent set is independent.  In this note, we introduce various quantities that can be defined for an arbitrary matroid and study its representability over $\mathbb{F}_q$ where $q$ is a prime power.  In addition, for a matroid of rank $r$, we examine the proportion of size-$(r-k)$ subsets that are dependent, and give bounds, in terms of the cardinality of the matroid and $q$, for this proportion, below which the matroid is not representable over $\mathbb{F}_q$. We also explore connections between the defined quantities andwdemonstrate that they can be used to prove that random matrices have high proportions of subsets of columns independent. Our study relates  to the results of our papers \cite{dmms, dmic} dealing with the cardinality of sets of $k$-independent vectors over $\mathbb{F}_q$ and our recent result from our paper \cite{sun} on the Maximal Distance Separation Conjecture over $\mathbb{F}_q$. We briefly survey these results.

\section{The cardinality of sets of $k$-independent vectors over finite fields of primer order}

For $q$ a prime power, let $F_{q}$ denote the finite field of order $q$, and let $F_q^n$ 
denote the $n$-dimensional vector space of all $n$-tuples over $F_q$.  Here, $n\geq 1$ is an integer. For an integer $k$ with $1\leq k\leq n$, a set of vectors $A\subset F_q^n$ is $k$-independent if all its subsets with at most $k$ elements are linearly independent.
An important quantity moving forward,  $Ind_q(n,k)$, is the cardinality of a $k$ independent subset of $F_q^n$. 
Our papers \cite{dmms, dmic} studied the cardinality of the quantity $Ind_q(n,k)$ for fixed $q,n,k$.
\subsection{An easy observation}
In examining the quantity $Ind_q(n,k)$, our first easy observation is the following:
\begin{itemize}
\item[(a)] $q^n-1=Ind_q(n,1)\geq Ind_q(n,2)\geq...\geq Ind_q(n,n)\geq n+1$ with equality with $q=2$.
\item[(b)] $Ind_q(n,2)=\frac{q^{n-1}}{q-1}$.
\end{itemize}

Indeed, (a) follows from the fact that any set of nonzero vectors is $1$-independent, $(k+1)$ independence implies $k$-independence and the $(n+1)-$ element set consisting of the standard basis plus the "all ones" vector is $n$-independent. 
(b) follows from the fact that two (nonzero) vectors are linearly independent if and only if neither is a multiple of the other.

.\section{Two extreme cases $k\leq 3$ and $k\geq 2n/3$}

In our paper \cite{dmms}, we investigated formulae for $Ind_2(n,k)$ in two extreme cases. $k\leq 3$ and $k\geq 2n/3$.
Indeed, our main result in \cite{dmms} is given in the following theorem:

\begin{theorem}
The following hold:
\begin{itemize}
\item[(a)] $Ind_2(n,3)=2^{n-1}$.
\item[(b)] $Ind_2(n,n-m)=n+1$ for $n\geq 3m+2,\, m\geq 0$.
\item[(c)] $Ind_2(n,n-m)=n+2$, for $n=3m+i,\, i=0,1,\, m\geq 2$.
\end{itemize}
\end{theorem}

\section{$Ind_q(n,k)=n+1$}

In our paper \cite{dmic}, we generalized part (b) of Theorem 1 proving a condition on $q,n,k$ which is both necessary and sufficient for $Ind_q(n,k)=n+1$ to hold. Indeed our main result in our paper \cite{dmic} is given by the following theorem:

\begin{theorem}
Assume that $2\leq k\leq n$. Then the following holds:
\begin{itemize}
\item[(a)] $Ind_q(n,k)=n+1$ if and only if $\frac{q(n+1)}{q+1}\leq k$.
\end{itemize}
\end{theorem}
One checks easily that (a) is consistent with Theorem 1(b).

\section{Analytic approach to the problem of matroid representability}

By a subset of a matrix we will mean a subset of its columns, and by its size we will mean the total number of columns it has. We will say a matroid is $q$-representable if it has a matrix representation over $\mathbb{F}_q$.

\subsection{A generalization of uniformity}
First, we generalize a basic definition.
\begin{definition}
A matroid $M$ of rank $r$ is said to be \textbf{uniform} if every size-$r$ subset of $M$ is independent.
\end{definition}
\begin{definition}
We define the \textbf{$k$-dependence} of a matroid of rank $r$ as the proportion of its size-$(r-k)$ subsets that are dependent. When a matrix has $k$-dependence $0$, we call it \textbf{$k$-independent}, otherwise we call it \textbf{$k$-dependent}. For a matroid $M$, we will denote rank by $r(M)$, cardinality by $s(M)$, and $k$-dependence by $d(M,k)$.
\end{definition}
Note that, by these definitions, a matroid $M$ is uniform if $d(M,0) = 0$, i.e., if it is $0$-independent.

\subsection{Optimal representable matrices}
It is natural to try to optimize some property of a matroid given certain constraints, especially $q$-representability. We use the following symbols to denote optimal achievable quantities:
\begin{definition}
$ $\newline
\begin{itemize}
\item By $Ind_q(r,k,d)$, we mean the largest $s$ such that there exists some full-rank $r \times s$ matrix $M$ over $\mathbb{F}_q$ with $k$-dependence $\leq d$. Equivalently, it is the size of the largest $q$-representable rank-$r$ matroid with $k$-dependence $\leq d$. 
\item By $D_q(r,k,s)$, we mean the smallest $d$ such that there exists some full-rank $r \times s$ matrix $M$ over $\mathbb{F}_q$ with $k$-dependence $\leq d$. Equivalently, it is the smallest $k$-dependence of any $q$-representable rank-$r$ matroid of size $s$.
\end{itemize}
\end{definition}
These quantities prove useful because we can use them to say the following:
\begin{lemma}
Let $M$ be a matroid. If, for some $k$,
\begin{itemize}
\item If $Ind_q(r(M),k,d(M,k)) \leq s(M)$
or
\item If $D_q(r(M),k,s(M)) \leq d(M,k)$ for some $k$,
\end{itemize}
then $M$ is not $q$-representable.
\end{lemma}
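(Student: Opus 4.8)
The plan is to prove the contrapositive, deriving both criteria from one observation: a representation is itself the witness that feeds the two extremal definitions. Concretely, I would assume $M$ is $q$-representable and fix the value of $k$ appearing in the hypothesis. By the definition of representability there is a full-rank $r(M)\times s(M)$ matrix $A$ over $\mathbb{F}_q$ whose column matroid is $M$. Since $k$-dependence is determined by the underlying matroid alone, $A$ has $k$-dependence exactly $d(M,k)$; in particular $A$ is a full-rank $r(M)\times s(M)$ matrix over $\mathbb{F}_q$ of $k$-dependence at most $d(M,k)$. The whole argument then amounts to inserting this single matrix $A$ into the optimization problems defining $Ind_q$ and $D_q$.

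For the $Ind_q$ part, $A$ certifies that a full-rank rank-$r(M)$ matrix of $k$-dependence at most $d(M,k)$ exists with exactly $s(M)$ columns; as $Ind_q(r(M),k,d(M,k))$ is by definition the largest admissible number of columns, this yields $s(M)\le Ind_q(r(M),k,d(M,k))$. For the $D_q$ part, $A$ certifies that some full-rank rank-$r(M)$ size-$s(M)$ matrix attains $k$-dependence $d(M,k)$; as $D_q(r(M),k,s(M))$ is by definition the smallest attainable $k$-dependence, this yields $D_q(r(M),k,s(M))\le d(M,k)$. Thus every $q$-representable matroid satisfies both inequalities at every $k$, so a single $k$ violating either one precludes the existence of $A$ and hence certifies that $M$ is not $q$-representable.

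The step I would be most careful about, and the genuine crux, is the orientation and strictness of these inequalities, since each criterion must be the logical negation of an inequality that representability already forces. Representability forces $s(M)\le Ind_q(r(M),k,d(M,k))$, so the usable certificate is the strict excess $Ind_q(r(M),k,d(M,k))<s(M)$ (equality is attained by representable extremizers such as $U_{2,4}$ over $\mathbb{F}_3$, and so must be excluded); likewise representability forces $D_q(r(M),k,s(M))\le d(M,k)$, so here the usable certificate is the reversed inequality $D_q(r(M),k,s(M))>d(M,k)$. I would therefore record the two conditions in these strict, respectively reversed, forms, after which the contrapositive argument above is immediate and needs no interaction among different values of $k$.
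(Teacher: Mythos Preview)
The paper does not supply a proof of this lemma; it is stated as an immediate consequence of the definitions and the text moves on. Your contrapositive argument---take a representing matrix $A$ and feed it as a witness into the two extremal definitions---is exactly the intended (tacit) reasoning, and it is correct.

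More importantly, you have caught that the lemma as printed is misstated. From the contrapositive you derive $s(M)\le Ind_q(r(M),k,d(M,k))$ and $D_q(r(M),k,s(M))\le d(M,k)$ for any $q$-representable $M$. The negations that certify non-representability are therefore $Ind_q(r(M),k,d(M,k))<s(M)$ (strict) and $D_q(r(M),k,s(M))>d(M,k)$ (reversed). The paper's first bullet has $\le$ where it needs $<$, and the second bullet has the inequality pointing the wrong way entirely; your $U_{2,4}$ over $\mathbb{F}_3$ example shows the equality case in the first bullet is genuinely attainable by a representable matroid, so this is not a quibble. The paper itself uses the corrected directions in its subsequent Lemma~3, so this is a typo in the statement rather than a conceptual error, and your write-up should record the corrected form as you propose.
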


\section{Equivalences of bounds}
An equivalence between bounds on $Ind$ and on $D$ exist due to the following:
\begin{lemma}
As a function of $s$, $D_q(r,k,s)$ is non-decreasing.
\end{lemma}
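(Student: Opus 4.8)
The plan is to prove the one-step inequality $D_q(r,k,s)\le D_q(r,k,s+1)$ for every $s\ge r$ and then chain these together to get the full monotonicity. The set of full-rank $r\times(s+1)$ matrices over $\mathbb{F}_q$ is finite and, for $s\ge r$, nonempty, so I can fix one, $M$, attaining the smallest $k$-dependence, i.e. $d(M,k)=D_q(r,k,s+1)$. Writing $n=s+1$ and $D=d(M,k)\binom{n}{r-k}$ for the number of dependent $(r-k)$-subsets of $M$, the goal becomes: delete one column $w$ from $M$ so that the resulting $r\times s$ matrix $M'$ still has rank $r$ and satisfies $d(M',k)\le d(M,k)$. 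Granting that, $D_q(r,k,s)\le d(M',k)\le D_q(r,k,s+1)$, as desired.

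For a column $v$, let $D_v$ be the number of dependent $(r-k)$-subsets of $M$ containing $v$. If $v$ is not a coloop, then $M\setminus v$ still has rank $r$, and its dependent $(r-k)$-subsets are precisely those of $M$ that avoid $v$, of which there are $D-D_v$; hence $d(M\setminus v,k)=(D-D_v)/\binom{n-1}{r-k}$. Using $\binom{n}{r-k}=\frac{n}{\,n-(r-k)\,}\binom{n-1}{r-k}$, a short manipulation shows that $d(M\setminus v,k)\le d(M,k)$ is equivalent to the clean inequality $D_v\ge (r-k)D/n$. So the lemma reduces to the claim that some non-coloop column $w$ satisfies $D_w\ge (r-k)D/n$.

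To prove the claim, double-count incidences between dependent $(r-k)$-subsets and their elements: $\sum_v D_v=(r-k)D$, so the average of $D_v$ over all $n$ columns is exactly $(r-k)D/n$. If $M$ has no coloops we are done at once, since some column meets at least the average. If $M$ has coloops, the key sub-claim is that $D_w\ge D_v$ whenever $w$ is a non-coloop and $v$ is a coloop; since $n=s+1>r$ forces at least one non-coloop to exist (coloops lie in every basis, so there are at most $r$ of them), this makes the average of $D_v$ over the non-coloop columns at least the average over the coloop columns, hence at least the overall average, and again produces a suitable $w$. For the sub-claim, use that a coloop $v$ satisfies: a set $S\ni v$ is dependent if and only if $S\setminus v$ is dependent. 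This identifies $D_v$ with the number of dependent $(r-k-1)$-subsets of $M\setminus v$, and $D_w$ with the number of dependent $(r-k)$-subsets of $M\setminus v$ containing $w$ plus the number of dependent $(r-k-1)$-subsets of $M\setminus v$ containing $w$. Subtracting, $D_w-D_v$ equals the number of dependent $(r-k)$-subsets of $M\setminus v$ containing $w$ minus the number of dependent $(r-k-1)$-subsets of $M\setminus v$ not containing $w$, and this is nonnegative because $T\mapsto T\cup\{w\}$ injects the second family into the first (a superset of a dependent set is dependent).

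I expect the coloop bookkeeping to be the main obstacle: the column we are forced to remove is one lying in many dependent subsets, and the only thing that could go wrong is that every such heavily-used column is a coloop and therefore unsafe to delete — exactly what the sub-claim rules out. The binomial identity and the averaging step are routine, and the ``superset of a dependent set is dependent'' injection is the single genuinely structural input. Once $D_q(r,k,s)\le D_q(r,k,s+1)$ is established for all $s\ge r$, composing these inequalities shows that $D_q(r,k,\cdot)$ is non-decreasing on its domain.
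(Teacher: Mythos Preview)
Your argument is correct. At its core it rests on the same identity the paper uses---the average of $d(M\setminus v,k)$ over all one-element deletions of $M$ equals $d(M,k)$---but you deploy it in the dual direction: rather than arguing that every deletion has $k$-dependence at least $D_q(r,k,s-1)$ and hence so does their average, you argue that \emph{some} deletion has $k$-dependence at most the average, and then check that such a deletion can be chosen to preserve rank. The paper's version is shorter, but it is silent on the rank-preservation issue: if a deleted column is a coloop, $M\setminus v$ has rank $r-1$, and $d(M\setminus v,k)$ then refers to $(r-1-k)$-subsets rather than $(r-k)$-subsets, so the averaging and the comparison with $D_q(r,k,s-1)$ no longer line up. Your sub-claim that $D_w\ge D_v$ whenever $w$ is a non-coloop and $v$ is a coloop (via the injection $T\mapsto T\cup\{w\}$) is exactly what is needed to close this gap, and it makes your proof the more complete of the two.
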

\begin{proof}
Let $M$ be a minimally $k$-dependent $q$-representable matroid of size $s$. That is, because we are dealing with finite sets and infima are always achievable,
$$d(M) = D_q(r(M),k,s(M)).
$$
Then, for every matroid $M'$ obtained by deletion of one element from $M$,
$$
d(M') \geq D_q(r(M'),k,s(M')) = D_q(r(M)-1,k,s(M)-1).
$$
Thus, because each size-$(n-k)$ subset is counted an equal number of times in the measurement of the $d(M')$,
$$
D_q(r(M),k,s(M)) \geq D_q(r(M)-1,k,s(M)-1).
$$
\end{proof}
The equivalence between bounds can be stated thus:
\begin{lemma}
\begin{itemize}
\item If, for some $q,r,k,d, Ind_q(r,k,d) < s$, then, for any $s' \geq s$, it holds that
$$
D_q(r,k,s') > d.
$$
\item If, for some $q,r,k,s, D_q(r,k,s) > d$, then, for any $d' \leq d$, it holds that
$$
Ind_q(r,k,d') < s.
$$
\end{itemize}
\end{lemma}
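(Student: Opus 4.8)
The plan is to deduce both bullets from the monotonicity of $D_q$ in $s$ (the preceding lemma) together with a careful unwinding of the definitions of $Ind_q$ and $D_q$. The key step I would isolate first is the equivalence
\[
D_q(r,k,s) \le d' \quad\Longleftrightarrow\quad Ind_q(r,k,d') \ge s ,
\]
holding for every choice of $q,r,k$ (with the standing understanding that the relevant classes of full-rank matrices over $\mathbb{F}_q$ are nonempty, so that both quantities are defined). The direction $(\Leftarrow)$ is the substantive one: if $Ind_q(r,k,d') \ge s$, then there is a full-rank $r\times S$ matrix $M$ over $\mathbb{F}_q$ with $S := s(M) = Ind_q(r,k,d') \ge s$ and $d(M,k) \le d'$; hence $D_q(r,k,S) \le d(M,k) \le d'$, and applying the monotonicity lemma to step down from $S$ to $s$ gives $D_q(r,k,s) \le D_q(r,k,S) \le d'$. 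The direction $(\Rightarrow)$ is immediate: if $D_q(r,k,s) \le d'$, the matrix attaining the minimum in the definition of $D_q(r,k,s)$ is a full-rank $r\times s$ matrix of $k$-dependence $\le d'$, whence $Ind_q(r,k,d') \ge s$. Here I would invoke the paper's standing remark that these extrema, being over finite sets of matrices, are attained.

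Granting the equivalence, both bullets are one-line consequences. For the first, assume $Ind_q(r,k,d) < s$; by the equivalence with $d' = d$ this says exactly $D_q(r,k,s) > d$, and then for any $s' \ge s$ the monotonicity lemma gives $D_q(r,k,s') \ge D_q(r,k,s) > d$. For the second, assume $D_q(r,k,s) > d$ and fix $d' \le d$; then $D_q(r,k,s) > d \ge d'$, that is, $D_q(r,k,s) \not\le d'$, so the contrapositive of the equivalence yields $Ind_q(r,k,d') < s$.

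I expect the one genuinely delicate point --- and the step I would write out in full --- to be the $(\Leftarrow)$ half of the equivalence, where one must pass from a witnessing matrix of size $S = Ind_q(r,k,d')$, possibly strictly larger than $s$, to a conclusion about size exactly $s$; this is precisely what the previously established monotonicity of $D_q(r,k,\cdot)$ buys us. One could instead try to argue directly by deleting $S-s$ columns from the large matrix, but then one has to keep the resulting matroid simultaneously full-rank and of $k$-dependence $\le d'$, which is fiddly (an averaging argument shows a single deletion can be found that does not increase the $k$-dependence, but not obviously one that also preserves the rank), so routing everything through $D_q$ and its monotonicity is cleaner. A secondary thing to be careful about is non-vacuousness at the parameter values in play, which I would handle by treating the nonemptiness of the ambient matrix classes (equivalently $s \ge r$) as a standing hypothesis.
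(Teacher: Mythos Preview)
Your argument is correct and is essentially the approach the paper intends: the paper states this lemma immediately after the monotonicity lemma and offers no separate proof, so the implied argument is precisely to unwind the definitions of $Ind_q$ and $D_q$ and invoke monotonicity of $D_q(r,k,\cdot)$, exactly as you do. Your explicit isolation of the equivalence $D_q(r,k,s)\le d' \Leftrightarrow Ind_q(r,k,d')\ge s$ and your use of monotonicity to bridge from the witnessing size $S=Ind_q(r,k,d')$ down to $s$ fill in the details the paper leaves tacit.
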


\section{Explicit bounds}
We give various explicit bounds on $Ind$ and $D$, on whichever of the two the explanation of the bound is simplest. In each case, the equivalent statement on the other function is implied.
\begin{theorem}
$Ind_q(r,k,0) \leq q^{k+1}(r-k-1)$
\end{theorem}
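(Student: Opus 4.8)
The plan is to prove this in two stages: first settle the base case $k=0$ by a direct "arc"-type argument, then bootstrap to arbitrary $k$ by a single projection-and-averaging step. Throughout, note that having $k$-dependence $0$ — i.e. every $(r-k)$-element set of columns independent — is equivalent to having \emph{every} subset of columns of size at most $r-k$ independent, since any dependent set of size $\le r-k$ extends to a dependent $(r-k)$-set.

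\textbf{Base case $k=0$.} Here I must show that if $v_1,\dots,v_s\in\mathbb{F}_q^r$ have the property that every $r$ of them form a basis, then $s\le q(r-1)$ (for $r\ge 3$; the cases $r\le 2$ are degenerate, where the stated inequality can fail and should be excluded). Since the columns span, after applying an element of $GL_r(\mathbb{F}_q)$ I may take $v_1=e_1,\dots,v_r=e_r$, the standard basis vectors. For any further column $v=(a_1,\dots,a_r)$, considering $\{v\}\cup\{e_i:i\ne j\}$ forces $a_j\ne 0$, so every column past the first $r$ has all coordinates nonzero and may be rescaled to have first coordinate $1$. For two such rescaled columns $v,w$, considering $\{v,w\}\cup\{e_i:i\notin\{1,j\}\}$ forces the $2\times 2$ minor on rows $1,j$ to be nonzero, i.e. $v_j\ne w_j$, for every $j\ge 2$. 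Thus in each coordinate the rescaled extra columns take pairwise distinct nonzero values, so there are at most $q-1$ of them and $s\le r+(q-1)\le q(r-1)$.

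\textbf{Reduction of general $k$ to $k=0$.} Let $M$ be represented by columns $v_1,\dots,v_s\in\mathbb{F}_q^r$ with every subset of size $\le r-k$ independent (so in particular $r\ge k+3$, else the statement is degenerate). Pick any $r-k-1$ of the columns; they are independent, spanning a subspace $U_0$ of dimension $r-k-1$ that contains exactly those $r-k-1$ columns (any further column in $U_0$ would give a dependent $(r-k)$-set). Let $\pi\colon\mathbb{F}_q^r\to\mathbb{F}_q^r/U_0\cong\mathbb{F}_q^{k+1}$ be the quotient. For each of the $N:=\frac{q^{k+1}-1}{q-1}$ one-dimensional subspaces $\ell\subseteq\mathbb{F}_q^{k+1}$, the preimage $\pi^{-1}(\ell)$ is a subspace of dimension $r-k$, and the columns inside it still have all their $(r-k)$-subsets independent; hence (padding/restricting as necessary) they number at most $Ind_q(r-k,0,0)$. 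Counting incidences between columns and the subspaces $\pi^{-1}(\ell)$: each of the $r-k-1$ columns of $U_0$ lies in all $N$ of them, and every other column lies in exactly one, so
\[
(r-k-1)N+\bigl(s-(r-k-1)\bigr)\;=\;\sum_{\ell}\#\{\text{columns in }\pi^{-1}(\ell)\}\;\le\;N\cdot Ind_q(r-k,0,0).
\]
Substituting the base case $Ind_q(r-k,0,0)\le q(r-k-1)$ and solving for $s$ gives
\[
s\;\le\;N\bigl(Ind_q(r-k,0,0)-(r-k-1)\bigr)+(r-k-1)\;\le\;\tfrac{q^{k+1}-1}{q-1}\,(q-1)(r-k-1)+(r-k-1)\;=\;q^{k+1}(r-k-1),
\]
which is the claimed bound on $Ind_q(r,k,0)$.

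\textbf{Main obstacle.} The crux is choosing to quotient by exactly $r-k-1$ columns: this is precisely the codimension that makes each fiber $\pi^{-1}(\ell)$ carry a \emph{uniform} submatroid of rank $r-k$, so that the already-proved $k=0$ bound applies fiberwise, and it is also exactly this choice that makes the final arithmetic telescope cleanly to $q^{k+1}(r-k-1)$ with no slack terms. The remaining care is in the base case — essentially the statement that high-rank arcs are short — and in handling the degenerate ranges $r\le k+2$, where the inequality is false (e.g. a whole projective space is a simple, hence $(r-2)$-independent, matroid of size exceeding $q^{r-1}$) and must be set aside.
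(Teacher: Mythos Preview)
Your proof is correct but takes a genuinely different route from the paper's. The paper argues in one shot: any $(r-k-1)$-dimensional subspace of $\mathbb{F}_q^r$ can contain at most $r-k-1$ of the columns (else some $r-k$ of them would be dependent), and then a density extrapolation --- the ratio of $r-k-1$ to the $q^{r-k-1}$ points of such a subspace, scaled up to the $q^r$ points of the whole space --- yields $q^{k+1}(r-k-1)$. You instead split the work into two stages: you first establish the $k=0$ case by a classical arc argument (in fact obtaining the sharper bound $s\le q+r-1$, which you then relax to $q(r-1)$), and then reduce the general $k$ to $k=0$ by quotienting out an $(r-k-1)$-dimensional span $U_0$ of chosen columns and covering $\mathbb{F}_q^r$ by the $(r-k)$-dimensional subspaces $\pi^{-1}(\ell)$ through $U_0$, applying the base case inside each. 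The paper's approach is shorter and needs no separate base case, since the bound of $r-k-1$ columns per $(r-k-1)$-plane is immediate; on the other hand, the density step as written there is informal (a straight double-count over all $(r-k-1)$-dimensional subspaces actually gives $(r-k-1)\frac{q^r-1}{q^{r-k-1}-1}$, slightly larger than $q^{k+1}(r-k-1)$). Your approach is longer but fully rigorous, the incidence count telescopes to the stated bound with no slack, and the arc lemma you prove along the way is independently a sharper statement in the $k=0$ case.
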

\begin{proof}
Suppose some matroid $M$ is representable $q$-representable. Then some $r(M) \times s(M)$ matrix $M'$ over $\mathbb{F}_q$ can be constructed with all size-$(n-k)$ subsets independent.

We treat the columns of $M'$ as vectors in $\mathbb{F}_q^r$, and assume that none of them are the zero vector.

Observe that at most $r-k-1$ columns of $M'$ can lie within a $(r-k-1)$-plane. This implies that the proportion between $(r-k-1)$ and the number of points in an $(r-k-1)$-plane bounds the proportion of the total number of vectors in $\mathbb{F}_q^r$ that are represented as columns in $M'$.

Explicitly, this proportion is
$$
\frac{r-k-1}{q^{r-k-1}}
$$
out of
$$
q^r
$$
vectors in the space. Thus, the total number of columns is bounded by
$$
q^r\frac{r-k-1}{q^{r-k-1}} = q^{k+1}(r-k-1).
$$
\end{proof}

\begin{theorem}
For some integer $n$, $D_q\left(r,k,n\frac{q^n-1}{q-1}\right)$ is minimized by the matrix $M$ consisting of $n$ copies of each unique nonzero vector in $\mathbb{F}_q^r$ up to scaling.
\end{theorem}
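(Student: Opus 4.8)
The plan is to translate the problem into a continuous optimization over multiplicity vectors and to show that the barycenter is optimal. Set $N=\frac{q^{r}-1}{q-1}$, the number of one-dimensional subspaces of $\mathbb{F}_q^{r}$, so that $M$ has $s=nN$ columns. A zero column lies in every subspace, hence in no independent set, so an optimal competitor may be assumed to have no zero column; as a matroid such a matrix is encoded by the function $f$ sending each projective point $P$ to the number of columns that are scalar multiples of $P$, with $\sum_P f(P)=s$. A family of $r-k$ columns is independent precisely when it consists of one column from each of $r-k$ \emph{distinct}, linearly independent projective points, so the number of independent $(r-k)$-subsets of columns is
$$
\mathrm{Ind}(f)=\sum_{B}\ \prod_{P\in B}f(P),
$$
the sum taken over all linearly independent $(r-k)$-subsets $B$ of projective points. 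Since $d(M',k)=1-\mathrm{Ind}(f)/\binom{s}{r-k}$ for any zero-column-free representative $M'$, and zero columns only raise the $k$-dependence, minimizing the $k$-dependence over all $q$-representable rank-$r$ matroids of size $s$ amounts to maximizing $\mathrm{Ind}(f)$ over integer vectors $f\ge 0$ with $\sum_P f(P)=s$. The constant vector $f\equiv n$ is integral, corresponds exactly to $M$, and yields a full-rank matrix (it contains a basis); so it suffices to prove that $\mathrm{Ind}$, viewed on the real simplex $\Delta=\{f\ge 0:\sum_P f(P)=nN\}$, attains its maximum at $f\equiv n$.

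Two structural facts are the engine. First, $\mathrm{Ind}$ is invariant under the action of $\mathrm{GL}_r(\mathbb{F}_q)$ permuting the projective points, because an invertible linear map preserves independence; hence $f\equiv n$, being the unique $\mathrm{GL}_r(\mathbb{F}_q)$-fixed point of $\Delta$, is a critical point of $\mathrm{Ind}|_\Delta$. Second, for distinct points $P\ne Q$ the function $\mathrm{Ind}$ is affine in each of $f(P),f(Q)$ with nonnegative cross-term coefficient $\sum_{B\supseteq\{P,Q\}}\prod_{R\in B\setminus\{P,Q\}}f(R)$; consequently, along any ``transfer direction'' $e_Q-e_P$ the restriction $t\mapsto\mathrm{Ind}(f+t(e_Q-e_P))$ is concave, with constant second derivative $-2\sum_{B\supseteq\{P,Q\}}\prod_{R\in B\setminus\{P,Q\}}f(R)$ (the same statement holds for second differences along integer transfers, so $\mathrm{Dep}$ is convex on every integer transfer line). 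Granting these, I would argue by smoothing: from any non-constant $f\in\Delta$, exhibit a pair $P,Q$ with $f(P)>f(Q)$ and $\partial_Q\mathrm{Ind}(f)\ge\partial_P\mathrm{Ind}(f)$, then move mass from $P$ to $Q$ until the two multiplicities equalize or the (concave, unimodal) restriction peaks; this does not decrease $\mathrm{Ind}$ while strictly decreasing $\sum_P f(P)^2$, so iterating must terminate, necessarily at $f\equiv n$. The discrete statement then follows by applying the same reasoning to a $k$-dependence-minimizing matroid of smallest $\sum_P f(P)^2$.

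The crux, and the step I expect to be hardest, is guaranteeing such a beneficial transfer out of every non-constant $f$ — equivalently, that $f\equiv n$ is a \emph{global} maximizer of $\mathrm{Ind}$ on $\Delta$, not merely a critical point. The obstruction is that $\mathrm{Ind}$ is \emph{not} a symmetric function of its coordinates: the projective geometry of $\mathbb{F}_q^{r}$ distinguishes which $(r-k)$-tuples of points are dependent (collinear triples when $r-k=3$, and so on), so the classical Schur-convexity criterion does not apply; indeed for $r-k\ge 3$ the function $\mathrm{Ind}$ fails to be concave on all of $\Delta$ (its Hessian is indefinite near vectors supported on few points), and a naive column swap from an over-represented point to an \emph{arbitrary} under-represented point can genuinely increase the number of dependent subsets. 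To close this gap I would attempt one of: (a) exploit the $\mathrm{GL}_r(\mathbb{F}_q)$-invariance by writing the barycenter as an average over the orbit of $f$ and propagating the inequality through transfer segments using transfer-concavity, a group-majorization argument; (b) induct on $r$ by restricting $f$ to hyperplanes — each independent $(r-k)$-set lies in a fixed number of hyperplanes, so $\mathrm{Ind}(f)$ is proportional to $\sum_H$ of the analogous quantity for $f|_H$ inside $H$ — and control the hyperplane masses by a convexity estimate (this variant handles $k\ge 1$, the uniform case $k=0$ requiring a direct count of collinear patterns); or (c) a refined exchange argument that transfers a column to a \emph{carefully chosen} under-represented point. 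The bilinearity computation and the hyperplane bookkeeping are routine; essentially all the difficulty is concentrated in establishing global maximality.
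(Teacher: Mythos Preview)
Your reduction to maximizing $\mathrm{Ind}(f)=\sum_{B}\prod_{P\in B}f(P)$ over multiplicity vectors on $\mathbb{P}^{r-1}(\mathbb{F}_q)$ is correct, and the transfer-concavity and $\mathrm{GL}_r(\mathbb{F}_q)$-invariance observations are valid. But the argument has a genuine gap at the step you yourself flag: none of the three routes is carried out, and as stated none of them closes. Route (a) does not follow, because averaging $f$ over its $\mathrm{GL}_r$-orbit is not a composition of single transfers $e_Q-e_P$, and you have already noted that $\mathrm{Ind}$ is not concave on all of $\Delta$; so the group-averaging inequality is exactly as hard as the original claim. Route (c) is precisely the unproved step. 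Your route (b), restriction to hyperplanes, faces the difficulty that for a non-uniform competitor the hyperplane masses $\sum_{P\in H}f(P)$ vary with $H$, so the inductive bound on each $\mathrm{Ind}_H(f|_H)$ depends on a quantity that is itself not controlled; the ``convexity estimate'' you allude to is again the whole problem.

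The paper's argument is an inductive reduction as in your route (b), but via \emph{projection from a point} rather than restriction to a hyperplane, and this is the missing geometric idea. One fixes a column $\vec{v}$ and observes that an $(r-k)$-subset containing exactly one copy of $\vec{v}$ is independent precisely when the projection of its remaining $r-k-1$ points from $\vec{v}$ onto a hyperplane $\mathbb{P}^{r-2}$ is independent; when $f\equiv n$, the projected configuration on $\mathbb{P}^{r-2}$ is again uniform (every line through $\vec{v}$ contributes the same multiplicity to its image), so the inductive hypothesis applies directly to the count of independent sets through $\vec{v}$. The advantage over hyperplane restriction is that the projection of the uniform configuration is automatically uniform with a fixed total mass, so no separate control of hyperplane masses is needed. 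The paper's write-up is itself quite terse and does not spell out the passage from ``optimal through each $\vec{v}$'' to ``optimal in total,'' but the projection-from-a-point device is the concrete mechanism that makes the induction go, and it is what your proposal lacks.
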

That is, the matrix consists of exactly $n$ representatives of each point in the projective space.
\begin{proof}
Let $M$ as above. The claim is clearly true for $k = n-2$, in which case $M$ is the only vector of the required size that is $(n-2)$-dependent. We proceed inductively.
Let $M$ as above, $\vec{v} \in M$. We can view $M$ as a multiset of points in the projective space $\mathbb{P}^{r-1}(\mathbb{F}_q)$. Let $\mathbb{P}^{r-2}$ be some hyperplane in $\mathbb{P}^{r-1}(\mathbb{F}_q)$. Then a set $S$ including one copy of $\vec{v}$ is independent if and only if the projection of $S\setminus\{\vec{v}\}$ from $\vec{v}$ onto $\mathbb{P}^{r-2}$ is independent. A set containing two copies of $\vec{v}$ is dependent. Thus, removing all copies of $\vec{v}$ from $M$, we can count the number of independent size-$(n-k)$ sets containing $\vec{v}$ by counting the number of independent size-$(n-k-1)$ points of the projection of the remaining members of $M$ onto $\mathbb{P}^{r-2}$ as above. If $M$ contains one column for each vector in $\mathbb{F}_q^r$, then exactly $q+1$ vectors will be projected to each point in the hyperplane. The $(k-1)$-dependence for that arrangement of vectors in the hyperplane, by the inductive hypothesis, is optimal.
\end{proof}

\section{$k$-extensions}
By putting $k$-independent matrices in a certain form, we can obtain a necessary and sufficient condition for $k$-independence.
\begin{definition}
Let $M$ be an $r \times s$ matrix of full rank. Rearrange the columns so that the first $r$ are linearly independent, and form an invertible matrix $A$. Multiply $M$ by $A^{-1}$. None of these operations affect which subsets of $M$ are independent. Now, $M$ should be of the form $\lbrack Id\vert M'\rbrack$, where $Id$ is the identity matrix, and $M'$ is called the ``extension''. If $M$ is $k$-independent, we call $M'$ a $k$-extension.
\end{definition}
In many ways, it turns out to be easier to directly examine the extension of a matrix. For example,
\begin{theorem}
A matrix $M$ is a $k$-independent if and only if every $(n+k) \times n$ submatrix of its extension is full-rank, for every $n$.
\end{theorem}
\begin{proof}
First, let $M'$ be an $r \times (r+\ell)$ systematic $k$-dependent matrix constructed with extension $M$. Because it is $k$-dependent, there exists some set of $r-k$ columns of $M'$ that sum to zero. Let us say that $n$ of these columns are in the extension, and $(r-k)-n$ of them are in the identity part. Then the $n$ columns combine to a vector with zeroes where all the $k-n$ columns have zeroes, which is in $r-((r-k)-n) = k+n$ places. Take the submatrix of $M$ that consists of the $n$ columns and the $k+n$ rows that they combine to zero on. Then this submatrix does not have full rank because its columns are dependent.

Next, suspending any requirement on $M'$ other than its size and that the first $r$ columns form an identity matrix, let there exist some $(k+n) \times n$ submatrix of $M$ that is not full-rank. Then its columns combine to the zero vector. In that case, extending this submatrix to the full height $r$, the $n$ columns combine to a vector $\vec{v}$ with $n+k$ zeroes. Thus, these columns, together with the $r-(n+k)$ columns of the identity with ones where $\vec{v}$ does not necessarily have zeroes, combine to zero. These columns number $r-(n+k) + n = r-k$, and so $M'$ is $k$-dependent.
\end{proof}

For the case $k=0$, this was pointed out by \cite{bell}.
\medskip


Theorem 5 has several interesting implications:
\begin{itemize}

\item Every $r \times (r-k)$  submatrix of its extension is full-rank, meaning that any collection of $n-k$ parallel column $n$-vectors within the matrix are independent.

\item Every $(r+k) \times r$ submatrix $S$ of its extension contains an invertible matrix as a subset of its rows. This bounds the number of possible
sets of $r$ rows of S that are dependent. It also bounds the number of sets of $r$ rows of any submatrix of its extension that are dependent. Looking at this submatrix
sideways, this is of course a condition on dependent columns. 
\item For the $k=0$ case, where we are discussing extensions of a uniform matroid, the condition is that all submatrices are of full rank. Equivalently, it is that all square submatrices are invertible. This property is closed under transpose, and so given a uniform matrix matroid, putting it in systematic form (which is always possible), the dual matroid can be constructed by simply taking the transpose of the extension and appending a larger identity matrix. We check that this takes a matrix of size $n \times (n+l)$, with extension of size $n \times l$, and produces a matrix with extension size $l \times n$, of size $l \times (n+l)$.
\item Our arguments provide ways to produce  $(k+1)$-extensions out of $k$-extensions, possibly the simplest being the addition of a zero row to the $k$-extension. This gives inductive lower bounds on the maximal sizes of $k$-extensions.
This observation may indeed lead to methods for making $(k-1)$-extensions out of $k$-extensions.
\item Any submatrix of a $k$-extension is a $k$-extension and this fact may suggest ways to analyze the transposes of $k$-extensions. There are many equivalent ways to state this. For example, out of a matrix $M$, construct the block matrix $M'' = [M|0_k]$, where $0_k$ stands for the matrix with $k$ zero columns. Then $M$ is a $k$-extension if and only if every $r \times r$ submatrix of $M''$ has rank at least $r-k$.

\item If the MDS conjecture is true, (see Section 9 below), then the sizes of the $0$-extensions are precisely those that can fit in the top triangle of a $q \times q$ matrix. 

\item A study of the overlapping behavior of $k$-extensions could prove fruitful in the following sense. That is, whether there are $k$-extensions that overlap on some submatrix, or whether they are totally overlapping, meaning that they overlap on a submatrix with height the minimum of the heights of the two matrices, and the same with width. If there is sufficient overlapping of $k$-extensions, then a large matrix with a staircase-shaped nonzero region can be constructed, every rectangular submatrix of whose nonzero region is a $k$-extension. In the $k=0$ case, \cite{bell} presents such triangles for $p \in \{5,7\}$

\item Similarly, if you can start with an $n \times (q-n) 0-$extension which admits either a row addition or a column addition, the MDS conjecture (for that $n$) is equivalent to being unable to fill in the last entry in the corner not covered by the new row or column. This is because, without the final entry, every submatrix of the matrix with the new column and row is either a submatrix of the row-added matrix or the column-added matrix, so the row-added column-added matrix fulfils the submatrix-rank condition.

\item The theorem suggests a generalization of $k$-dependence to $k < 0$, in which the width of the required full-rank matrices exceed their height. Note that under the transpose, $k$-extensions are taken to $(-k)$-extensions. This is consistent with $0$-extensions being taken to $0$-extensions.

\end{itemize}

\section{Random matrices}

Using the quantities defined at the beginning of this paper, and defining two more, can be used to prove that, with very high probability, a very high proportion of the subsets of a certain size of a random matrix are independent.

By ``random matrix,'' we mean a matrix whose columns are randomly chosen nonzero vectors.

\begin{definition}
Let $Ind_q(r,k,d,p)$ be the largest $s,$ or $D_q(r,k,s,p)$ the smallest $d$, or $P_q(r,k,d,s)$ the smallest $p$, such that, with probability $1-p$, a random $r \times s$ matrix has $k$-dependence $\leq d$.
\end{definition}

\begin{lemma}
Denote the probability that $(r-k)$ nonzero vectors chosen randomly from $\mathbb{F}_q^r$ are independent by $\pi_{q,r,k}$. Then,
$$
\pi_{q,r,k}=\prod_{i=0}^{r-k}{\frac{q^r-q^i}{q^r-1}}.
$$
\end{lemma}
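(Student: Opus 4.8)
The plan is to compute $\pi_{q,r,k}$ by revealing the chosen vectors one at a time and applying the chain rule for probabilities. Write $\vec{v}_1,\dots,\vec{v}_{r-k}$ for the $r-k$ independent, uniformly random vectors drawn from $\mathbb{F}_q^r\setminus\{\vec 0\}$. A tuple is linearly independent exactly when each new vector avoids the span of its predecessors, so
$$
\pi_{q,r,k}=\prod_{j=1}^{r-k}\Pr\!\left[\vec v_j\notin\mathrm{span}(\vec v_1,\dots,\vec v_{j-1})\ \middle|\ \vec v_1,\dots,\vec v_{j-1}\ \text{independent}\right].
$$
The key observation is that, conditioned on $\vec v_1,\dots,\vec v_{j-1}$ being linearly independent, their span is a $(j-1)$-dimensional $\mathbb{F}_q$-subspace, which contains exactly $q^{j-1}$ vectors and hence $q^{j-1}-1$ nonzero ones. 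Since $\vec v_j$ is uniform over the $q^r-1$ nonzero vectors and independent of the history, the conditional probability above is $\frac{(q^r-1)-(q^{j-1}-1)}{q^r-1}=\frac{q^r-q^{j-1}}{q^r-1}$, \emph{independent of which} particular independent $(j-1)$-tuple was drawn. Re-indexing $i=j-1$ and multiplying yields the stated product.

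Alternatively, and perhaps more transparently, one can argue by direct counting: the number of ordered $(r-k)$-tuples of linearly independent vectors in $\mathbb{F}_q^r$ is the standard quantity $(q^r-1)(q^r-q)\cdots(q^r-q^{\,r-k-1})$ — choose $\vec v_1$ among all $q^r-1$ nonzero vectors, then $\vec v_2$ outside the $q$-element line through $\vec v_1$, then $\vec v_3$ outside the $q^2$-element plane spanned so far, and so on — while the total number of ordered $(r-k)$-tuples of nonzero vectors is $(q^r-1)^{r-k}$. The ratio of these counts is $\pi_{q,r,k}$, and it coincides factor-by-factor with the displayed product $\prod_i \frac{q^r-q^i}{q^r-1}$.

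I expect the only genuine subtlety to be the justification, in the conditioning argument, that the conditional success probability at step $j$ depends on the past only through its dimension, not through the specific independent tuple $(\vec v_1,\dots,\vec v_{j-1})$ already seen; this is exactly what makes the tower of conditional probabilities collapse to a clean product, and it rests on every $(j-1)$-dimensional subspace of $\mathbb{F}_q^r$ having the same cardinality $q^{j-1}$ together with the mutual independence of the samples. One should also verify the endpoints of the product against the displayed formula — the first factor is $\frac{q^r-q^0}{q^r-1}=1$, reflecting that a single nonzero vector is always independent — so that the index range is stated consistently.
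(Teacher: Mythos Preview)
Your argument is correct and is essentially the same as the paper's: both compute $\pi_{q,r,k}$ by revealing the vectors sequentially and noting that, given $i$ independent vectors already chosen, the next nonzero vector avoids their $q^i$-point span with probability $\frac{q^r-q^i}{q^r-1}$. Your write-up is simply more careful---making explicit the chain rule, the dimension-only dependence of the conditional probability, and the counting alternative---and your closing remark about checking the endpoints is apt, since the displayed upper limit $r-k$ should read $r-k-1$ to match the $r-k$ vectors being chosen.
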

\begin{proof}
Each term of the product divides the number of points outside an $i$-plane by the number of nonzero points in the space. This is the probability that, given that we have already picked $i$ independent vectors, that the next one we pick will lie outside the span of those $i$.
\end{proof}

\begin{theorem}
$$
D_q(r,k,s) \leq 1 - \pi_{q,r,k}.
$$
\end{theorem}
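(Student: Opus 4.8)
The plan is to prove the bound by the probabilistic method: I will exhibit a distribution on full-rank $r\times s$ matrices over $\mathbb{F}_q$ whose expected $k$-dependence is at most $1-\pi_{q,r,k}$, so that some matrix in its support has $k$-dependence $\le 1-\pi_{q,r,k}$, which is exactly what $D_q(r,k,s)\le 1-\pi_{q,r,k}$ asserts. Throughout I assume $s\ge r$ (if $s<r$ there is no full-rank $r\times s$ matrix and the statement is vacuous) and $0\le k<r$.

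For the core estimate, write the $k$-dependence of a fixed matrix $M$ as an average of indicators,
\[
 d(M,k)=\binom{s}{r-k}^{-1}\sum_{T}\mathbf{1}[\,T\text{ is dependent}\,],
\]
where $T$ runs over the $\binom{s}{r-k}$ size-$(r-k)$ subsets of the columns of $M$. If the columns of $M$ are chosen i.i.d.\ uniformly among the nonzero vectors of $\mathbb{F}_q^r$, then for every $T$ the columns it selects are $r-k$ independently and uniformly chosen nonzero vectors; by the preceding Lemma the probability that these are linearly dependent — that is, that $T$ is a dependent set — equals $1-\pi_{q,r,k}$. Linearity of expectation then gives $\mathbb{E}[d(M,k)]=1-\pi_{q,r,k}$, hence some realization has $k$-dependence at most $1-\pi_{q,r,k}$.

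The one subtlety — and really the only obstacle — is that $D_q(r,k,s)$ is defined as a minimum over \emph{full-rank} matrices, while a purely random matrix can be rank-deficient. To fix this I would run the same argument on a seeded model: set the first $r$ columns equal to the standard basis $e_1,\dots,e_r$ and draw the remaining $s-r$ columns i.i.d.\ uniformly among nonzero vectors. This forces $M$ to have rank $r$, and one still needs to check that every size-$(r-k)$ subset $T$ is dependent with probability at most $1-\pi_{q,r,k}$. If $T$ uses $j$ of the fixed basis columns (these are automatically independent) and $r-k-j$ random columns, then, conditioning on the fixed columns, the probability that $T$ is independent is a product of some of the factors $\tfrac{q^{r}-q^{i}}{q^{r}-1}\in(0,1]$ appearing in the formula for $\pi_{q,r,k}$ (namely those of index $i\ge j$); since each factor is at most $1$, omitting factors only increases the product, so $\Pr[\,T\text{ independent}\,]\ge\pi_{q,r,k}$ and thus $\Pr[\,T\text{ dependent}\,]\le 1-\pi_{q,r,k}$. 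The linearity-of-expectation argument then applies verbatim to this model and produces a full-rank witness, completing the proof.

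So the mathematical content is light: the per-subset dependence probability is handed to us by the previous Lemma, and the only thing requiring any thought is the bookkeeping that makes the basis-seeding step preserve the bound, which rests solely on the monotonicity of partial products of the factors $\tfrac{q^{r}-q^{i}}{q^{r}-1}$.
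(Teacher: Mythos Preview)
Your proof is correct and follows essentially the same averaging/probabilistic-method argument as the paper: fix a subset of column indices, observe that its dependence probability is $1-\pi_{q,r,k}$, average over all subsets by linearity, and conclude that some matrix meets the bound. You go further than the paper by actually addressing the full-rank requirement in the definition of $D_q(r,k,s)$ via the seeded model and the monotonicity of the partial products; the paper's own proof simply ignores this point.
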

\begin{proof}
Take a certain choice of $(r-k)$ distinct integers between $1$ and $r$. These correspond to a single size-$(r-k)$ subset of a matrix of size $s$. Then, the proportion of this particular subset of all size-$s$ matrices that are independent is equivalently $\pi_{q,r,k}$. Since this proportion is equal for any choice of subset, we have that the proportion of all size-$(r-k)$ subsets of all matrices of size $s$ is $\pi_{q,r,k}$. Thus, some matrix achieves this proportion.
\end{proof}
\begin{corollary}
$1-\pi_{q,r,k}$ is the mean $k$-dependence of all $r \times s$ matrices without zero columns.
\end{corollary}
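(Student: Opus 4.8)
The plan is to unwind the definition of mean $k$-dependence and apply linearity of expectation together with the symmetry of the uniform distribution over columns. Fix $q,r,k,s$ and let $\mathcal{M}$ be the finite set of all $r\times s$ matrices over $\mathbb{F}_q$ with no zero column, equipped with the uniform distribution; this is exactly the distribution of a random matrix in the sense of this section. For $M\in\mathcal{M}$ and a set $S$ of $r-k$ column indices, write $\mathds{1}[S\text{ dep in }M]$ for the indicator that the corresponding columns are linearly dependent. By the definition of $k$-dependence,
$$
d(M,k)=\frac{1}{\binom{s}{r-k}}\sum_{S}\mathds{1}[S\text{ dep in }M],
$$
the sum ranging over all $\binom{s}{r-k}$ size-$(r-k)$ index sets.

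Next I would average over $M\in\mathcal{M}$ and exchange the two finite sums:
$$
\frac{1}{|\mathcal{M}|}\sum_{M\in\mathcal{M}} d(M,k)=\frac{1}{\binom{s}{r-k}}\sum_{S}\Pr_{M}\big[S\text{ dep in }M\big].
$$
It then remains to evaluate $\Pr_{M}[S\text{ dep in }M]$ for a single fixed $S$. Because the columns of $M$ are chosen independently and uniformly from the $q^{r}-1$ nonzero vectors, the $r-k$ columns indexed by $S$ are $r-k$ i.i.d.\ uniform nonzero vectors, and the probability they are independent is $\pi_{q,r,k}$: conditioning successively, after $i$ independent columns have been placed the next one is independent of them precisely when it avoids their $i$-dimensional span, which happens with probability $(q^{r}-q^{i})/(q^{r}-1)$, and multiplying these conditional probabilities yields exactly the product in the lemma defining $\pi_{q,r,k}$. (This is the same computation used in the proof of the preceding theorem.) Hence $\Pr_{M}[S\text{ dep in }M]=1-\pi_{q,r,k}$ for every $S$.

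Substituting this value — which does not depend on $S$ — into the displayed identity, the outer average over $S$ collapses and we obtain
$$
\frac{1}{|\mathcal{M}|}\sum_{M\in\mathcal{M}} d(M,k)=1-\pi_{q,r,k},
$$
which is the claim. The argument has no serious obstacle; the only point requiring care is the justification that for a \emph{fixed} coordinate subset $S$ the induced columns are genuinely i.i.d.\ uniform nonzero vectors, so that the conditional-probability telescoping is valid, and that the symmetry across the different $S$ is what lets the outer average disappear. Alternatively, one can shortcut the whole argument by observing that the proof of the preceding theorem already shows that the average, over all $S$ and all $M\in\mathcal{M}$, of the independence indicator equals $\pi_{q,r,k}$; the corollary is simply the complementary statement.
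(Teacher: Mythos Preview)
Your argument is correct and is essentially the same double-counting/symmetry computation the paper uses: the paper gives no separate proof of the corollary, treating it as immediate from the proof of the preceding theorem, which already observes that for each fixed index set $S$ the proportion of matrices with that subset independent is $\pi_{q,r,k}$, and then averages over $S$. Your write-up simply makes the linearity-of-expectation step and the i.i.d.\ justification explicit, which is fine; as you note in your final sentence, the shortcut via the preceding theorem is exactly how the paper intends the corollary to be read.
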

Because $\pi_{q,r,k}$ is in general very close to one, viewing $p$ as a proportion of the set of all $r \times s$ matrices, we can get bounds on $D_q(r,k,s,p)$. Specifically,
\begin{theorem}
For any $q,r,k,s,p,$
$$
D_q(r,k,s,p) \leq \frac{1- \pi_{q,d,k}}{p}.
$$
\end{theorem}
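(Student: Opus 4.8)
The plan is to obtain the bound as a one-line application of Markov's inequality to the Corollary just proved. Fix $q,r,k,s$ and let $X$ denote the $k$-dependence $d(M,k)$ of a random $r\times s$ matrix $M$, i.e.\ $M$ has its columns drawn (independently, uniformly) from the nonzero vectors of $\mathbb{F}_q^r$. Since $X$ is a proportion of subsets, $0\le X\le 1$ holds pointwise, and by the Corollary its expectation over this family is exactly $\mathbb{E}[X]=1-\pi_{q,r,k}$. (I read the $\pi_{q,d,k}$ in the statement as a typo for $\pi_{q,r,k}$.)

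First I would invoke Markov's inequality for the nonnegative random variable $X$: for any threshold $t>0$,
$$
\Pr\big[X\ge t\big]\le\frac{\mathbb{E}[X]}{t}=\frac{1-\pi_{q,r,k}}{t}.
$$
Next I would substitute $t=d:=\dfrac{1-\pi_{q,r,k}}{p}$, assuming $0<p\le 1$ (if this $d$ is $\ge 1$ the claimed inequality is vacuous, as $k$-dependence never exceeds $1$). This yields $\Pr[X\ge d]\le p$, hence
$$
\Pr\big[X\le d\big]\ge\Pr\big[X<d\big]=1-\Pr\big[X\ge d\big]\ge 1-p.
$$
Thus with probability at least $1-p$ a random $r\times s$ matrix has $k$-dependence $\le d$, and since $D_q(r,k,s,p)$ is by definition the least value with this property, $D_q(r,k,s,p)\le d=\dfrac{1-\pi_{q,r,k}}{p}$.

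I do not expect a genuine obstacle here: all the real content is in the Corollary, and Markov needs only nonnegativity and the value of the mean — no independence or distributional structure of the columns beyond what the Corollary already uses. The only points requiring care are (i) confirming that the probabilistic model ``random matrix'' matches the ensemble (matrices with nonzero columns) over which the Corollary computes the average, so the two notions of ``mean'' coincide, and (ii) handling the degenerate regime where $p$ is small enough that the right-hand side exceeds $1$, in which case the bound is trivially true. I would also flag the apparent index typo $\pi_{q,d,k}\to\pi_{q,r,k}$ when writing up.
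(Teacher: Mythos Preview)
Your proposal is correct and matches the paper's own reasoning: the paper does not spell out a formal proof but simply invokes the Corollary on the mean $k$-dependence and notes that ``viewing $p$ as a proportion of the set of all $r\times s$ matrices'' yields the bound, which is precisely the Markov-inequality step you carry out. Your handling of the degenerate case and the typo $\pi_{q,d,k}\to\pi_{q,r,k}$ is also appropriate.
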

\begin{corollary}
For any $q,r,k,s,d,$
$$
P_q(r,k,s,d) \leq \frac{1- \pi_{q,d,k}}{d}.
$$
\end{corollary}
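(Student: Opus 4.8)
Proof proposal for the final Corollary ($P_q(r,k,s,d) \leq \frac{1-\pi_{q,d,k}}{d}$).

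The plan is to recognize that this Corollary is the ``solve for $p$'' counterpart of the preceding Theorem, and that both are at bottom Markov's inequality applied to the $k$-dependence viewed as a nonnegative random variable on the space of random $r\times s$ matrices (columns drawn i.i.d.\ uniformly from the nonzero vectors of $\mathbb{F}_q^r$). First I would recall from the Corollary to the bound $D_q(r,k,s)\le 1-\pi_{q,r,k}$ that the expected $k$-dependence of such a random matrix is exactly $1-\pi_{q,r,k}$; this follows by linearity of expectation, since every size-$(r-k)$ subset of columns is independent with probability $\pi_{q,r,k}$, so no independence across distinct subsets is needed.

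Since $d(M,k)$ is a proportion, it is a nonnegative random variable bounded by $1$, so Markov's inequality gives, for any threshold $d>0$,
$$
\Pr\bigl[\,d(M,k) > d\,\bigr]\;\le\;\frac{\mathbb{E}[d(M,k)]}{d}\;=\;\frac{1-\pi_{q,r,k}}{d},
$$
i.e.\ a random $r\times s$ matrix has $k$-dependence at most $d$ with probability at least $1-\frac{1-\pi_{q,r,k}}{d}$. Comparing with the definition of $P_q(r,k,s,d)$ as the \emph{smallest} $p$ for which a random $r\times s$ matrix has $k$-dependence $\le d$ with probability $1-p$, the displayed inequality exhibits $p=\frac{1-\pi_{q,r,k}}{d}$ as one admissible value, so the minimum can only be smaller, which is precisely $P_q(r,k,s,d)\le \frac{1-\pi_{q,r,k}}{d}$. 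Equivalently, one can deduce it from the preceding Theorem directly: that Theorem gives $D_q(r,k,s,p)\le \frac{1-\pi_{q,r,k}}{p}$ for every $p$, and substituting $p=\frac{1-\pi_{q,r,k}}{d}$ makes the right-hand side equal to $d$, so at confidence level $1-p$ the achievable $k$-dependence is $\le d$.

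The only delicate points are minor bookkeeping: Markov controls $\Pr[d(M,k)>d]$ (or $\ge d$, which only strengthens the estimate) while the definition of $P_q$ asks about the event $d(M,k)\le d$, so one must take the complement carefully; and when $\frac{1-\pi_{q,r,k}}{d}\ge 1$ the asserted bound is vacuously true, so no harm is done there. I do not expect a genuine obstacle — the substance of the statement was already established in the preceding Theorem and Corollary, and this is essentially a restatement obtained by interchanging the roles of the tolerance $d$ and the failure probability $p$.
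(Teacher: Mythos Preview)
Your proposal is correct and matches the paper's intended approach: the Corollary is stated without proof as an immediate restatement of the preceding Theorem with the roles of $d$ and $p$ interchanged, and both rest on Markov's inequality applied to the nonnegative random variable $d(M,k)$ with mean $1-\pi_{q,r,k}$. In fact you supply more detail than the paper does, since the paper leaves both the Theorem and the Corollary unproved; your identification of Markov as the underlying mechanism, together with the substitution $p=(1-\pi_{q,r,k})/d$, is exactly the argument the paper is relying on.
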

Note that these quantities do not depend on $s$.

\section{Maximum distance separable (MDS) conjecture}

The work in this section is taken from our recent paper \cite{sun}. Suppose $2 \leq k \leq q$. A $k\times n$ maximum distance separable ($k\times n$ MDS) code $M$ is a $k \times n$ matrix with entries in $\mathbb{F}_q$ such that every set of $k$ columns of $M$ is linearly independent. The Maximum Distance Separable (MDS) conjecture is a well-known problem in coding theory and algebraic geometry with important consequences for example to the study of arcs in finite projective spaces and coding theory. See \cite{Ball, Ball1, sun} and the many references cited therein.

The conjecture, first posed by Singleton in 1964 \cite{Singleton} gives a possible upper-bound on the size of a $k\times n$ MDS code. 
More precisely, the MDS conjecture says the following:
\medskip

{\bf MDS Conjecture}: The maximum width, $n,$ of a $k \times n$ MDS code with entries in $\mathbb{F}_q$ is $q+1$, unless $q$ is even and $k \in \{3,q-1\}$, in which case the maximum width is $q+2$.
\medskip

We remark that there exist $k\times n$ MDS codes that attain the maximum possible width as given by the MDS conjecture. These are the Reed-Solomon codes (Definition 6 below),  see for example \cite{sun,KO,Lidl} and the many references cited therein.
\medskip

Henceforth all matrices will have entries in $\mathbb{F}_q$. When we speak to linear combinations, we mean nontrivial $\mathbb{F}_q$-linear combinations.

\subsection{Some known work on the MDS conjecture}

We provide some known (not exhaustive) work on the MDS Conjecture. An excellent survey on the MDS conjecture  can be found in \cite{Ball1}. The MDS conjecture has been verified when $q$ is prime. It is also known to hold when $q$ is a square and with $k\leq c\sqrt{pq}$ where the constant $c$ depends on whether $q$ is odd or even. When $q$ not a square, it is known to be true for $k\leq c'\sqrt{pq}$ where $c'$ depends on whether $q$ is odd or even.  It is also known to hold for all $k\times n$ MDS codes with alphabets of size at most 8.
\medskip

We are ready to state the main result of our recent paper \cite{sun}. which provides necessary and sufficient conditions for the MDS conjecture to hold. 

\subsection{Statement of the main result of \cite{sun}}

In this section, we state our main result, Theorem 8.  We need some definitions and notation. These are given in:

\begin{definition}
\begin{itemize}
\item[(a)] We denote by $\mathcal{P}_q$ the ring $\mathbb{F}_q[x]/(x^q-x)$ of polynomial functions over $\mathbb{F}_q$ of maximum degree $q$. $\mathcal{P}_q$ is a vector space over $\mathbb{F}_q$. 
\item[(b)] Throughout, the \textit{perp space} of a vector, $\vec{v}$ in an inner product space, $(\mathbf{V},\cdot):=\mathbf{V}$  is the set given by $\vec{v}^\perp = \{ \vec{w} \in \mathbf{V} | \vec{w} \cdot \vec{v} = 0 \}$.\\
	\indent The \textit{perp space} of a subspace, $\mathbf{U} \subseteq \mathbf{V}$, where $\mathbf{V}$ is an inner product space is the set given by $\mathbf{U}^\perp = \{ \vec{w} \in \mathbf{V} | \forall \vec{u} \in \mathbf{U},\vec{w} \cdot \vec{u} = 0 \}$. ($\vec{w}$ and $\vec{u}$ are orthogonal). 
\item[(c)] For every non-negative integer $n$, we define the subset $\mathcal{O}_n \subset \mathcal{P}_q$ as the set of polynomials in $\mathcal{P}_q$ that are either the zero polynomial, or have at most $n$ distinct roots in $\mathbb{F}_q$. If $n \geq q$, then $\mathcal{O}_n = \mathcal{P}_q$.
\item[(d)] We denote by $\langle Y, Z \rangle$ the subspace of $\mathcal{P}_q$ generated by the elements of subspaces $Y$ and $Z$.
\item[(e)] A \textit{Reed-Solomon code} of dimension $k$ is a $k \times q$ matrix with entries in $\mathbb{F}_q$ such that each column of the matrix is of the form $[1,a,a^2,\ldots,a^{k-2},a^{k-1}]^\intercal$ for some $a \in \mathbb{F}_q$.
\item[(f)] An \textit{Extended Reed-Solomon code} is a Reed-Solomon code with the column $[0,0,0,\ldots,0,1]^\intercal$ appended.
\end{itemize}
\end{definition}

Both Reed-Solomon and Extended Reed-Solomon codes are $k\times n$ MDS codes and it proved in \cite{Seroussi}, that for odd $q$,  no column other than $[0,0,0,\ldots,0,1]^\intercal$ can be appended to a Reed-Solomon code to produce another $k\times n$ MDS code.\\
\medskip

The main result in our paper \cite{sun} is given as Theorem 8 below:
\begin{theorem}

Let k be an integer such that $2 \leq k \leq q = p^r,$ where p is prime and r is a positive integer. Suppose that either
		\begin{enumerate}
			\item $q$ odd
			\item $q$ even and $k\in \{3, q-1\}$.
		\end{enumerate}
		
Consider the following statements (1-5) below:

\begin{enumerate}		

\item[(1)] The MDS Conjecture is true. That is, the maximum width, $n,$ of a $k \times n$ MDS code with entries in $\mathbb{F}_q$ is $q+1$, unless $q$ is even and $k \in \{3,q-1\}$, in which case the maximum width is $q+2$.

\item[(2)] Let $M'$ be a $k \times (q+2)$ matrix. Then some linear combination of the rows of $M'$ has at least $k$ zero entries.
		
\item[(3)] Let $M'$ be a $k \times (q+2)$ matrix such that the first two columns of $M'$ are $[1,0,\ldots,0]^\intercal$ and $[0,1,0,\ldots,0]^\intercal$. Then some linear combination of the rows of $M'$ has at least $k$ zero entries.

\item[(4)] There do not exist distinct subspaces $Y$ and $Z$ of $\mathcal{P}_q$ such that
		\begin{enumerate}
			\item $dim(\langle Y, Z \rangle) = k$.
			\item $dim(Y) = dim(Z) = k-1$.
			\item $\langle Y, Z \rangle \subset \mathcal{O}_{k-1}$
			\item $Y\cup Z \subset \mathcal{O}_{k-2}$
			\item $Y\cap Z \subset \mathcal{O}_{k-3}$.
		\end{enumerate}
		
\item[(5)] There is no integer $s$ with $k < s \leq q$ such that the Reed-Solomon code $\mathcal{R}$ with entries in $\mathbb{F}_q$ of dimension $s$ can have $s-k+2$ columns $\mathcal{B} = \{b_1,\ldots,b_{s-k+2}\}$ added to it, such that:
		\begin{enumerate}
			\item Any $s \times s$ submatrix of $\mathcal{R} \cup \mathcal{B}$ containing the first $s-k$ columns of $\mathcal{B}$ is independent (non-zero determinant).
			\item $\mathcal{B} \cup \{[0,0,\ldots,0,1]^{\intercal} \}$ is independent.
		\end{enumerate}
		
	\end{enumerate}
\bigskip	

Then the following holds true:

\begin{itemize}
\item[(Part A)]:  (1), (2), (3) and (5) are equivalent.
\item[(Part B)]:  (3) implies (4).
\end{itemize}
\end{theorem}

\section{Acknowledgement}

The authors would like to gratefully acknowledge the support of the National Science Foundation.

\section{ Statements and Declarations}

The authors declare no conflicts of interest.

\end{document}